\newtheorem{theorem}{Theorem}
\newtheorem{corollary}{Corollary}
\newtheorem{definition}{Definition}
\newtheorem{proposition}{Proposition}
\newtheorem{lemma}{Lemma}
\newtheorem{example}{Example}
\newcommand{\R}{\mathbb{R}}
\begin{document}

\title{
Higher-order retraction maps and construction of numerical methods for optimal control of mechanical systems}

\author{Alexandre Anahory Simoes, Mar\'ia Barbero Liñ\'an, Leonardo Colombo, David Mart\'in de Diego. 
\thanks{A. Anahory Simoes (alexandre.anahory@ie.edu) is with the School of Science and Technology, IE University, Spain.}
\thanks{ M. Barbero  Li\~n\'an (m.barbero@upm.es) is with Departamento de Matem\'atica Aplicada, Universidad Polit\'ecnica de Madrid, Av. Juan de Herrera 4, 28040 Madrid, Spain.}
\thanks{L. Colombo (leonardo.colombo@car.upm-csic.es) is with Centre for Automation and Robotics (CSIC-UPM), Ctra. M300 Campo Real, Km 0,200, Arganda
del Rey - 28500 Madrid, Spain.} \thanks{David Mart\'in de Diego (david.martin@icmat.es) is with the Institute of Mathematical Sciences (CSIC-UAM-UCM-UC3M). Calle Nicol\'as Cabrera 13-15, Cantoblanco, 28049, Madrid, Spain.} \thanks{The authors acknowledge financial support from Grant PID2019-106715GB-C21 funded by MCIN/AEI/ 10.13039/501100011033.}}%

\maketitle

\begin{abstract}
    Retractions maps are used to define a discretization of the tangent bundle of the configuration manifold as two copies of the configuration manifold where the dynamics take place. Such discretization maps can be conveniently lifted to a higher-order tangent bundle to construct geometric integrators for the higher-order Euler-Lagrange equations. Given a cost function, an optimal control problem for fully actuated mechanical systems can be understood as a higher-order variational problem. In this paper we introduce the notion of a higher-order discretization map associated with a retraction map to construct geometric integrators for the optimal control of mechanical systems. In particular, we study applications to path planning for obstacle avoidance of a planar rigid body.
\end{abstract}


\section{Introduction}
In this paper, we consider fully-actuated optimal control problems as  higher-order variational problems (see \cite{B} and \cite{BlochCrouch2}). Such problems are defined on the $k^{th}$-order tangent bundle $T^{(k)}Q$ of a differentiable manifold $Q$ (see \cite{BookLeon}). For a higher-order Lagrangian function $L:T^{(k)}Q\to\mathbb{R}$ and local coordinates $(q,\dot{q},\ldots, q^{(k)})$ on $T^{(k)}Q$  the  higher-order variational problems are given by
$$\min_{q(\cdot)}\int_{0}^{T}L(q(t),\dot{q}(t),\ldots,q^{(k)}(t))dt,$$
subject to the boundary conditions $q^{(j)}(0)=q_{0}^{j}$, $q^{(j)}(T)=q_{T}^{j}$ for $0\leq j\leq k-1$, where $q^{(j)}(t)=\frac{d^{j}}{dt^{j}}q(t)$.

The relationship between higher-order variational problems and optimal control problems of fully-actuated mechanical systems comes from the fact that Euler-Lagrange equations are represented by a second-order Newtonian system and  fully-actuated mechanical control systems have the form $F(q,\dot{q},\ddot{q})=u$, where $u$ are the control inputs, as many as the dimension of the configuration manifold $Q$. If $C$ is a cost function of an optimal control problem given by
$$\min_{(q(\cdot),u(\cdot))}\int_{0}^{T}C(q,\dot{q},u)dt,$$ it can be rewritten as a second-order variational problem replacing $u$ by the above expression.

The notion of retraction map is an essential tool in different research areas like optimization theory, numerical analysis and interpolation (see \cite{AbMaSeBookRetraction} and references therein). A retraction map plays the role of generalizing the linear-search methods in Euclidean spaces to general manifolds. On a manifold with nonzero curvature to move along the tangent line does not guarantee that the motion stays on the manifold. The retraction
map provides the tool to define the notion of moving in a direction of a tangent vector
while staying on the manifold. That is why retraction maps have been widely used to
construct numerical integrators of ordinary differential equations, since it allows us to
move from a point and a velocity to one nearby point so that the differential equation
can be discretized. 

In \cite{21MBLDMdD} the classical notion of retraction map used to approximate geodesics is extended to the new notion of discretization maps, that is rigorously defined to become a powerful tool to construct geometric integrators. Using the geometry of the tangent and cotangent bundles, the authors were able to tangently and cotangent lift the map so that these lifts inherit the same properties as the original one and they continue to be discretization maps. In particular, the cotangent lift of a discretization map is a natural symplectomorphism, which plays a key role for constructing symplectic integrators. It was further applied in \cite{DM} to the construction of numerical methods for optimal control problems from a Hamiltonian perspective. 

Geometric integrators for optimal control problems seen as second-order variational problems were studied in \cite{leo} (see also \cite{leo2}, \cite{leo3}). The goal of this paper is to extend the notion of discretization map given in~\cite{21MBLDMdD} to higher-order tangent bundles and construct symplectic integrators for optimal control problems of only fully-actuated mechanical systems.

The paper is structured as follows. Section~\ref{sec2} introduces the necessary tools on differential geometry and the geometric formalism for the dynamics of mechanical systems. Section~\ref{sec3} describes optimal control problems as higher-order variational problems and the Lagrangian and Hamiltonian characterization of necessary conditions for optimality. In Section~\ref{sec4} we  introduce retraction maps and discretization maps as well as the cotangent lift of discretization maps which allows the construction of symplectic integrators. In Section~\ref{sec5} we define higher-order discretization maps and describe the construction of symplectic integrators for higher-order mechanical systems. We employ this construction in Section~\ref{sec6}
to construct geometric integrators for optimal control of mechanical systems. In particular, we study applications to path planning for obstacle avoidance of planar rigid bodies.

\section{Background on Geometric Mechanics}\label{sec2}

Let $Q$ be a $n$-dimensional differentiable configuration
manifold of a mechanical system with local coordinates $(q^A)$, $1\leq A\leq n$. Denote by $TQ$ the
tangent bundle. If $T_{q}Q$ denotes the tangent space of $Q$ at the point $q$, then $\displaystyle{TQ:=\cup_{q\in Q}T_{q}Q}$, with induced local coordinates $(q^A, \dot{q}^A)$.  There is a canonical projection $\tau_{Q}:TQ \rightarrow Q$, sending each vector $v_{q}$ to the corresponding base point $q$. Note that in coordinates $\tau_{Q}(q^{A},\dot{q}^{A})=q^{A}$. 

The vector space structure of $T_{q}Q$ makes possible to consider its dual space, $T^{*}_{q}Q$, to define the cotangent bundle as $\displaystyle{T^{*}Q:=\cup_{q\in Q}T^{*}_{q}Q},$ with local coordinates $(q^A,p_A)$.  There is a canonical projection $\pi_{Q}:T^{*}Q \rightarrow Q$, sending each momenta $p_{q}$ to the corresponding base point $q$. Note that in coordinates $\pi_{Q}(q^{A},p_{A})=q^{A}$.

Given a Lagrangian function $L:TQ\rightarrow \R$, the corresponding Euler-Lagrange
equations are
\begin{equation}\label{qwer}
\frac{d}{dt}\left(\frac{\partial L}{\partial\dot
q^A}\right)-\frac{\partial L}{\partial q^A}=0, \quad 1\leq A\leq n.
\end{equation}

Equations \eqref{qwer} determine a system of $n$ second-order
differential equations. If we assume that the Lagrangian is regular,
i.e., the ${(n\times n)}$-matrix $\left(\frac{\partial^{2} L}{\partial \dot q^A\partial \dot q^B}\right)$, $1\leq A, B\leq n$, is non-singular, the local existence and uniqueness of solutions are guaranteed for any given initial condition by employing the Implicit Function Theorem.

A Hamiltonian function $H:T^{*}Q\to\mathbb{R}$ is described by the total energy of a mechanical system and leads to Hamilton's equations on $T^{*}Q$, whose solutions are integral curves of the Hamiltonian vector field $X_H$ taking values in $T(T^{*}Q)$ associated with $H$. 
Locally, $X_H(q,p)=\left(\frac{\partial H}{\partial p},-\frac{\partial H}{\partial q}\right)$, that is,
\begin{equation}\label{hameq1}\dot{q}^{A}=\frac{\partial H}{\partial p_A},\quad\dot{p}_{A}=-\frac{\partial H}{\partial q^A},\quad 1\leq A\leq n.\end{equation} Equations~\eqref{hameq1} determine a set of $2n$ first order ordinary differential equations (see \cite{B}, for instance, for more details).

A one-form $\alpha$ on $Q$ is a map assigning to each point $q$ a cotangent vector on $q$, that is, $\alpha(q)\in T^{*}_qQ$. Cotangent vectors acts linearly on vector fields according to $\alpha(X) = \alpha_{i}X^{i}\in \mathbb{R}$ if $\alpha = \alpha_{i}dq^{i}$ and $X = X^{i} \frac{\partial}{\partial q^{i}}$. Analogously, a two-form or a $(0,2)$-tensor field is a bilinear map that acts on a pair of vector fields to produce a number. 

A symplectic form $\omega$ on a manifold $Q$ is a $(0,2)$-type tensor field that is skew-symmetric and non-degenerate, i.e., $\omega(X,Y)=-\omega(Y,X)$ for all vector fields $X$ and $Y$ and if $\omega(X,Y)=0$ for all vector fields $X$, then $Y\equiv 0$. 

The set of vector fields and the set of 1-forms on $Q$ are denoted by $\mathfrak{X}(Q)$ and $\Omega^{1}(Q)$, respectively. The symplectic form induces a linear isomorphism $\flat_{\omega}:\mathfrak{X}(Q)\rightarrow \Omega^{1}(Q)$, given by $\langle\flat_{\omega}(X),Y\rangle=\omega(X,Y)$ for any vector fields $X, Y$. The inverse of $\flat_{\omega}$ will be denoted by $\sharp_{\omega}$.

As described in~\cite{LiMarle}, the cotangent bundle $T^*Q$ of a  differentiable manifold $Q$ is equipped with a canonical exact symplectic structure $\omega_Q=-d\theta_Q$, where $\theta_Q$ is the canonical 1-form on $T^*Q$. In canonical bundle coordinates $(q^A, p_A)$ on $T^*Q$, 
$
\theta_Q= p_A\, \mathrm{d}q^A$ and
$\omega_Q= \mathrm{d}q^A\wedge \mathrm{d}p_A\; .
$
Hamilton's equations can be intrinsically rewritten as $
\imath_{X_H}\omega_Q\colon = \flat_\omega(X_H)=\mathrm{d}H\; .
$
Hamiltonian dynamics are characterized by the following two essential properties~\cite{hairer}:
\begin{itemize}
	\item  Preservation of energy by the Hamiltonian function:  $$0=\omega_Q(X_H, X_H)=dH(X_H)=X_H(H)\,.$$
	
	\item Preservation of the symplectic form: 
 If $\{\phi^t_{X_H}\}$ is the flow of $X_H$, then the pull-back of the differential form by the flow is preserved, $(\phi^t_{X_H})^*\omega_Q=\omega_Q$.

\end{itemize}

 Recall that a
pair $(Q,\omega_{Q})$ is called a symplectic manifold if $Q$ is a
differentiable manifold and  $\omega_Q$ is a symplectic 2-form. As a consequence, the restrictions of $\omega_Q$ to each $q\in Q$ makes the tangent space $T_{q}Q$ into a symplectic vector space.

\begin{definition}
Let $(Q_1,\omega_1)$ and
$(Q_2,\omega_{2})$ be two symplectic manifolds, let $\phi:Q_1\to Q_2$ be a smooth map. The
map $\phi$ is called symplectic if the symplectic forms are preserved:
$
\phi^*\omega_2=\omega_1$. Moreover, it is a \textit{symplectomorphism} if $\phi$ is a diffeomorphism and $\phi^{-1}$ is also
symplectic.
\end{definition}

		Let $Q_1$ and $Q_2$ be $n$-dimensional manifolds and $F: Q_1\rightarrow Q_2$ be a smooth map. The
	{\it tangent lift} $TF: TQ_1\rightarrow TQ_2$  of $F$ is defined by $TF(v_q)=T_qF (v_q)\in T_{F(q)} Q_2\, \, \mbox{ where } v_q\in T_qQ_1$, and $T_qF$ is the tangent map of $F$ whose matrix is the Jacobian matrix of $F$ at $q\in Q_1$.

		As the tangent map $T_qF$ is linear, the dual map $T_{q}^*F\colon T^*_{F(q)}Q_2\rightarrow T^*_qQ_1$ is defined as follows:
  \[\langle(T^*_{q}F)(\alpha_2), v_{q}\rangle=\langle \alpha_2, T_{q}F(v_{q})\rangle\mbox{ for every } v_q\in T_qQ_1.\]
  Note that $(T^*_{q}F)(\alpha_2)\in T^*_qQ_1$. 
  
 \begin{definition}\label{def:colift}
Let $F: Q_1\rightarrow Q_2$ be a diffeomorphism. The vector bundle morphism 
	$\widehat{F}: T^*Q_1\rightarrow T^*Q_2$ defined by	$\widehat{F}=T^*F^{-1}$
	is called the cotangent lift of $F^{-1}$. 
	 \end{definition}
In other words, $\widehat{F}(\alpha_q)= 	T^*_{F(q)}F^{-1} (\alpha_q)$ where $\alpha_q\in T^*_q Q_1$. Obviously, $(T^*F^{-1})\circ (T^*F)={\rm Id}_{T^*Q_2}$.

\subsection{Higher-order tangent bundles}

The higher-order tangent bundle is essentially a generalization of the tangent space of the manifold $Q$ to higher-order derivatives, when one interprets tangent vectors as the velocity vector of some curve in $Q$. Analogously, an element of the $k$-th order tangent bundle can be defined as an equivalence relation identifying all curves that match up to $k$-th order derivative.

Let $c_1, c_2:\R\rightarrow Q$ be two curves on $Q$. Consider the equivalence relation $\sim_{k}$ at $0\in \R$ determined by the following two conditions:
\begin{enumerate}
    \item $c_1(0)=c_2(0)$;
    \item $c_1^{(i)}(0)=c_2^{(i)}(0)$ for all $1 \leqslant i \leqslant k$, where the notation $c^{(i)}$ represents the $i$-th derivative of $c$.
\end{enumerate}
In this case, we say that $c_1$ and $c_2$ are $\sim_{k}$-related at $0$. Moreover, the equivalence class of $c$ determined by $\sim_{k}$ is called the $k$-jet of $c$ and is represented by $j_{0}^{(k)}c$. The set of all $k$-jets at $0$ is denoted by $J_{0}^{(k)}(\R,Q)$ in some general contexts. But, from now on, it will be denoted by $T^{(k)}Q$, the $k$-th order bundle of $Q$. The $k$-th order bundle of $Q$ is a smooth manifold (see \cite{BookLeon}) and admits several fibrations: $\pi^{k}_{r}:T^{(k)}Q \rightarrow T^{(r)}Q$ mapping $j^{(k)}_{0}c \mapsto j^{(r)}_{0}c$ for $0\leqslant r< k$. Observe that for $r=1$, $T^{(1)}Q = TQ$ and for $r=0$, $T^{(0)}Q = Q$.

If $(q^{A})$ are local coordinates on the manifold $Q$, then the $k$-jet $j_{0}^{(k)}c$ is uniquely determined by the coordinates $(q^{A}, q^{{(0)}^A}, \dots, q^{{(k)}^A}),$ where
$$q^{A}=c^{A}(0), \quad  q^{{(r)}^A} = \frac{1}{r!}c^{{(r)}^A}(0), \quad 1\leqslant A \leqslant \dim Q.$$
In a sense, the local coordinates for $k$-jets are provided by the Taylor polynomial of $c$ at $0$.

Given a smooth map $F:Q_{1} \rightarrow Q_{2}$, we define $T^{(k)}F:T^{(k)}Q_{1} \rightarrow T^{(k)}Q_{2}$ by $T^{(k)}F (j_{0}^{(k)}c) = j_{0}^{(k)}(F\circ c)$, for some curve $c:\R\rightarrow Q_1$.
 

	\section{Variational Formulation of Optimal Control Problems for Mechanical Systems}\label{sec3}

	There are some problems in which the functional to be minimized depends on higher-order derivatives of a curve. This is the case in interpolating problems \cite{BlochHussein}, \cite{SCC00}; in generation of trajectories for quadrotors \cite{MK}, \cite{MK2}, or in a generalization of least square problem on Riemannian manifolds \cite{MacSil:2010}.
 

	The goal of this paper is to use discretization maps obtained from the retraction maps to produce numerical algorithms for the solutions of optimal control problems for fully actuated mechanical systems.	The prototype problem in this paper is the optimization of the cost functional
	$$\mathcal{J}=\int_{0}^{T} ||u||^{2} \ dt$$
	subjected to controlled Euler-Lagrange equations describing the dynamics of standard mechanical systems, i.e., 
	$$\frac{d}{dt} \left(\frac{\partial L}{\partial \dot{q}}\right) - \frac{\partial L}{\partial q} = u.$$
	
	The cost functional may then be recast as the second-order functional
	$$\mathcal{J}=\int_{0}^{T} \left|\left|\frac{d}{dt} \left(\frac{\partial L}{\partial \dot{q}}\right) - \frac{\partial L}{\partial q} \right|\right|^{2} \ dt= \int_{0}^{T}  \mathcal{L}(q,\dot{q}, \ddot{q}) \ dt.$$
	Using the variational principle, necessary equations for a trajectory to be optimal are the second order Euler-Lagrange equations:
	$$\frac{d^{2}}{dt^{2}} \left(\frac{\partial \mathcal{L}}{\partial \ddot{q}}\right) - \frac{d}{dt} \left(\frac{\partial \mathcal{L}}{\partial \dot{q}}\right) + \frac{\partial \mathcal{L}}{\partial q} = 0\, .$$

We want to use the results in \cite{21MBLDMdD} (see also \cite{DM}) to produce geometric numerical methods for the optimal control problem under study. But first, we need to get these results generalized for the higher-order tangent bundles. as well as to study the Hamiltonian version of optimality conditions.

	
	
	A second-order Lagrangian $\mathcal{L}$ can be associated with a Lagrangian energy $E_{\mathcal{L}}:T^{(3)}Q\rightarrow \mathbb{R}$ defined by
	$$E_{\mathcal{L}}(q,\dot{q},\ddot{q}, q^{(3)})=\dot{q}\hat{p}_{(0)} + \ddot{q}\hat{p}_{(1)} - L(q, \dot{q}, \ddot{q}),$$
	where $\hat{p}_{(0)}$ and $\hat{p}_{(1)}$ are the generalized momenta given by
	\begin{equation*}
		\begin{split}
			\hat{p}_{(0)} & = \frac{\partial L}{\partial \dot{q}} - \frac{d}{dt}\frac{\partial L}{\partial \ddot{q}}, \,\,\,
			\hat{p}_{(1)} = \frac{\partial L}{\partial \ddot{q}}\, .
		\end{split}
	\end{equation*}
	These momenta are conserved along solutions of the second-order Euler-Lagrange equations (see \cite{BookLeon} for instance).
	
	As usual the link between Lagrangian and Hamiltonian formalism is the corresponding Legendre transformation $\text{Leg}_{\mathcal{L}}:T^{(3)}Q \rightarrow T^{*}(TQ)$ given by
	\begin{equation*}
		\text{Leg}_{\mathcal{L}}(q,\dot{q},\ddot{q}, q^{(3)}) = (q, \dot{q}, \hat{p}_{(0)}, \hat{p}_{(1)})\, .
	\end{equation*}
	
	The associated Hamiltonian function $H:T^{*}(TQ) \rightarrow \mathbb{R}$ is given by
	\begin{equation*}
		H(q, \dot{q}, \hat{p}_{(0)}, \hat{p}_{(1)}) = E_{\mathcal{L}} \circ \text{Leg}_{\mathcal{L}}^{-1} (q, \dot{q}, \hat{p}_{(0)}, \hat{p}_{(1)}),
	\end{equation*} and the second-order Hamitlon equations are given by $$\dot{q}=\frac{\partial H}{\partial \hat{p}_{(0)}},\quad\ddot{q}=\frac{\partial H}{\partial \hat{p}_{(1)}},\quad \dot{\hat{p}}_{(0)}=-\frac{\partial H}{\partial q},\quad\dot{\hat{p}}_{(1)}=-\frac{\partial H}{\partial\dot{q}}.$$

\section{Discretization maps}\label{sec4}

The first notion of retraction that appears in the literature can be found in~\cite{1931Borsuk} from a topological viewpoint. Later on, the notion of retraction map as defined below is used to obtain Newton's method on Riemannian manifolds~\cite{1986Shub,2002Adler}.

\begin{definition}\label{def-RetractMap} A \textit{retraction map} on a manifold $Q$ is a smooth mapping $R$ from the tangent bundle $TQ$ onto $Q$. Let $R_q$ denote the restriction of $R$ to $T_qQ$, the following properties are satisfied:
\begin{enumerate}
	\item $R_q(0_q)=q$, where $0_q$ denotes the zero element of the vector space $T_qQ$.
	\item With the canonical identification $T_{0_q}T_qQ\simeq T_qQ$, $R_q$ satisfies \begin{equation}\label{eq-DefRetract-prop}
	{\rm D}R_q(0_q)=T_{0_q}R_q={\rm Id}_{T_qQ},
	\end{equation}
	where ${\rm Id}_{T_qQ}$ denotes the identity mapping on $T_qQ$.
\end{enumerate}
\end{definition}

The condition~\eqref{eq-DefRetract-prop} is known as \textit{local rigidity condition} since, given $\xi\in T_qQ$,  the curve $\gamma_\xi(t)=R_q(t\xi)$ has $\xi$ as tangent vector at $q$, i.e. $\dot{\gamma}_\xi(t)= \langle DR_q(t\xi), \xi\rangle\; \hbox{ and, in consequence}, \dot{\gamma}_\xi(0)= {\rm Id}_{T_qQ}(\xi)=\xi$.

A typical example of a retraction map is the exponential map, $\hbox{exp}$, on Riemannian manifolds given in~\cite[Chapter 3.2]{doCarmo}. Therefore, the image of $\xi$ through the exponential map is a point on the Riemannian manifold $(Q,g)$ obtained by moving along a geodesic a length equal to the norm of $\xi$ starting with the velocity $\xi/\|\xi\|$, that is, 
\[
\hbox{exp}_q(\xi)=\sigma (\|\xi\|)\; ,
\]
where $\sigma$ is the unit speed geodesic such that $\sigma(0)=q$ and $\dot{\sigma}(0)=\xi/\|\xi\|$. 

Next, we define a generalization of the retraction map in Definition~\ref{def-RetractMap} that allows a discretization of the tangent bundle of the configuration manifold leading to the construction of numerical integrators as described in~\cite{21MBLDMdD}. Given a point and a velocity, we obtain two nearby points that are not necessarily equal to the initial base point. 

\begin{definition} \label{def:DiscreteMap2} A map 
	$R_d\colon U\subset TQ\rightarrow Q\times Q$ given by \begin{equation*}
	R_d(q,v)=(R^1(q,v),R^2(q,v)),
	\end{equation*} 
	where  $U$ is an open neighborhood of the zero section $0_q$ of $TQ$, 
	defines a {\it discretization map on $Q$} if it satisfies 
	\begin{enumerate}
		\item $R_d(q,0)=(q,q)$,
		\item $T_{0_q}R^2_q-T_{0_q}R^1_q\colon T_{0_q}T_qQ\simeq T_qQ\rightarrow T_qQ$ is equal to the identity map on $T_qQ$ for any $q$ in $Q$, where $R^a_q$ denotes the restrictions of $R^a$, $a=1,2$, to $T_qQ$.
	\end{enumerate}
\end{definition}
Thus, the discretization map $R_d$ is a local diffeomorphism from some neighborhood of the zero section of $TQ$.

If $R^1(q,v)=q$, the two properties in Definition~\ref{def:DiscreteMap2} guarantee that the both properties in Definition~\ref{def-RetractMap} are satisfied by $R^2$. Thus, Definition~\ref{def:DiscreteMap2} generalizes Definition~\ref{def-RetractMap}. 

		

\begin{example} The mid-point rule on an Euclidean vector space can be recovered from the following discretization map:
$R_d(q,v)=\left( q-\dfrac{v}{2}, q+\dfrac{v}{2}\right).$

\end{example}


\subsection{Cotangent lift of discretization maps}

As the Hamiltonian vector field takes value on $TT^*Q$, the discretization map must be on $T^*Q$, that is, 
$R^{T^*}_d: TT^*Q \rightarrow T^*Q\times T^*Q$. Such a map is obtained by cotangently lifting a discretization map $R_d\colon TQ\rightarrow Q\times Q$, so that the construction $R^{T^*}_d$ is a symplectomorphism. In order to do that, we need the following three symplectomorphisms (see \cite{21MBLDMdD} and \cite{DM} for more details): 
\begin{itemize}
\item The cotangent lift of the diffeomorphism $R_d\colon TQ\rightarrow Q\times Q$ as described in Definition~\ref{def:colift}. 
	\item The canonical symplectomorphism:
	\begin{equation*} \alpha_Q\colon T^*TQ  \longrightarrow  TT^*Q  
	\end{equation*}  \mbox{ such that } $\alpha_Q(q,v,p_q,p_v)= (q,p_v,v,p_q).$

	\item  The symplectomorphism between $(T^*(Q\times Q), \omega_{Q\times Q})$     and   
	$(T^*Q\times T^*Q, \Omega_{12}:=pr_2^*\omega_Q-pr^*_1\omega_Q)$:
		\begin{equation*}
	\Phi:T^*Q\times T^*Q \longrightarrow T^*(Q\times Q)\; , \; 
		\end{equation*} given by $\Phi(q_0, p_0; q_1, p_1)=(q_0, q_1, -p_0, p_1).$
	\end{itemize}
Diagram in Fig.~\ref{Diag:RdT*} summarizes the construction procress from $R_d$ to $R_d^{T^*}$:
	\begin{figure} [htb!]
 \begin{equation*}
\xymatrix{ {{TT^*Q }} \ar[rr]^{{{R_d^{T^*}}}}\ar[d]_{\alpha_{Q}} && {{T^*Q\times T^*Q }}  \\ T^*TQ \ar[d]^{\pi_{TQ}}\ar[rr]^{	\widehat{R_d}}&& T^*(Q\times Q)\ar[u]_{\Phi^{-1}}\ar[d]^{\pi_{Q\times Q}}\\ TQ \ar[rr]^{R_d} && Q\times Q }
\end{equation*}
    \caption{Definition of the cotangent lift of a discretization.}\label{Diag:RdT*}
\end{figure}

\begin{proposition}\cite{21MBLDMdD}
	Let $R_d\colon TQ\rightarrow Q\times Q$ be a discretization map on $Q$. Then $${{R_d^{T^*}=\Phi^{-1}\circ \widehat{R_d}\circ \alpha_Q\colon TT^*Q\rightarrow T^*Q\times T^*Q}}$$ 
is a discretization map  on $T^*Q$.\label{Prop:RdT*}
\end{proposition}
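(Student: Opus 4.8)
The plan is to verify that $R_d^{T^*} = \Phi^{-1}\circ\widehat{R_d}\circ\alpha_Q$ satisfies the two conditions of Definition~\ref{def:DiscreteMap2} relative to the manifold $T^*Q$, while simultaneously tracking the symplectic structure so that the resulting map is moreover a symplectomorphism onto its image (which is what makes it useful for symplectic integration). Since $\alpha_Q$ and $\Phi^{-1}$ are symplectomorphisms by construction, and $\widehat{R_d} = T^*R_d^{-1}$ is the cotangent lift of a diffeomorphism (hence a symplectomorphism between the canonical cotangent structures, as recalled after Definition~\ref{def:colift}), the composition is automatically symplectic from $(TT^*Q, \text{pullback of }\omega_Q)$ to $(T^*Q\times T^*Q, \Omega_{12})$; that part requires only invoking the three bullet points. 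So the real content is checking the two discretization-map axioms.

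First I would fix local bundle coordinates compatible with the diagram: $(q^A, v^A)$ on $TQ$, $(q^A, v^A, p_A, \pi_A)$ on $T^*TQ$ dual to them, $(q^A, p_A, \dot q^A, \dot p_A)$ on $TT^*Q$, and $(q_0^A, p_A^0; q_1^A, p_A^1)$ on $T^*Q\times T^*Q$. Write $R_d(q,v) = (R^1(q,v), R^2(q,v))$ and let $\widehat{R_d}$ be computed from the formula $\widehat{R_d}(\alpha) = T^*_{R_d(q,v)}R_d^{-1}(\alpha)$, i.e. via the transpose of the inverse Jacobian of $R_d$. Then I would chase a point of $TT^*Q$ through $\alpha_Q$, then $\widehat{R_d}$, then $\Phi^{-1}$, obtaining an explicit coordinate expression for $R^{1,T^*}$ and $R^{2,T^*}$ in terms of $R^1, R^2$ and their derivatives. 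For the zero-section condition, I would set the fibre velocity-momentum data to zero and use $R_d(q,0)=(q,q)$ together with property (2) of Definition~\ref{def:DiscreteMap2} for $R_d$ to conclude $R_d^{T^*}(q,p,0,0) = ((q,p),(q,p))$; the base-point part is immediate from $R_d(q,0)=(q,q)$, and the momentum part follows because at $v=0$ the relevant block of the Jacobian of $R_d$ is controlled by $T_{0_q}R^1$ and $T_{0_q}R^2$, whose difference is the identity.

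The main obstacle — and the step I would spend the most care on — is the second axiom: showing that $T_{0}(R^2)^{T^*} - T_0(R^1)^{T^*}$, restricted to the fibre $T_{0_{(q,p)}}T_{(q,p)}^*Q \simeq T_{(q,p)}^*Q$, equals the identity on $T_{(q,p)}^*Q$. This is a linearization-at-zero computation: I would differentiate the explicit coordinate formulas for $R^{a,T^*}$ obtained above at the zero section, organize the resulting linear maps into block form splitting $T_{(q,p)}^*Q$ into its "configuration" and "momentum" directions, and show the two $2n\times 2n$ matrices differ by the identity. Here the key inputs are again the normalization $R_d(q,0)=(q,q)$, the identity $T_{0_q}R^2_q - T_{0_q}R^1_q = \mathrm{Id}$, and the explicit forms of $\alpha_Q$ and $\Phi$, which merely permute and sign-flip coordinates and therefore do not disturb the "difference equals identity" structure. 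A clean way to avoid the heaviest computation is to argue abstractly: $\alpha_Q$ and $\Phi^{-1}$ are linear isomorphisms on fibres that intertwine the zero sections appropriately, and $\widehat{R_d}$ differentiated at the zero section of $T^*TQ$ reduces to the transpose-inverse of $T_{0_q}R_d$; feeding the block structure of $T_{0_q}R_d = (T_{0_q}R^1, T_{0_q}R^2)$ through these linear identifications yields the claim without a brute-force Jacobian chase. I expect the bookkeeping of signs coming from $\Phi(q_0,p_0;q_1,p_1)=(q_0,q_1,-p_0,p_1)$ and the coordinate swap in $\alpha_Q$ to be the only genuinely delicate point; everything else is formal once the diagram in Fig.~\ref{Diag:RdT*} is read off in coordinates.
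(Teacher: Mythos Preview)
The paper does not supply its own proof of this proposition: it is quoted verbatim from~\cite{21MBLDMdD} and stated without argument, as is the Corollary on symplecticity that follows it. There is therefore nothing in the present paper to compare your attempt against.

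That said, your plan is the natural one and is almost certainly what the proof in~\cite{21MBLDMdD} does: chase a point of $TT^*Q$ through $\alpha_Q$, $\widehat{R_d}$, and $\Phi^{-1}$ in adapted coordinates, then verify the two axioms of Definition~\ref{def:DiscreteMap2} using $R_d(q,0)=(q,q)$ and $T_{0_q}R^2_q - T_{0_q}R^1_q = \mathrm{Id}_{T_qQ}$. Two minor remarks. First, the symplectomorphism claim you front-load is actually the content of the subsequent Corollary, not of the Proposition itself; the Proposition only asserts that $R_d^{T^*}$ is a discretization map, so you can cleanly separate that from the axiom check. Second, for the zero-section axiom the momentum part does \emph{not} need property~(2) of $R_d$: since $\widehat{R_d}$ is a vector-bundle morphism over $R_d$ (it is fibrewise linear), it sends the zero covector over $0_q\in TQ$ to the zero covector over $(q,q)$, and $\Phi^{-1}$ then returns $((q,0),(q,0))$; tracing back through $\alpha_Q$ shows that the full zero section of $TT^*Q$ over $(q,p)$ maps to $((q,p),(q,p))$ with only property~(1) of $R_d$ and the linearity of the cotangent lift. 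Property~(2) is needed exactly where you expect it, in the linearization check for the second axiom.
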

\begin{corollary}\cite{21MBLDMdD} The discretization map
 ${{R_d^{T^*}}}=\Phi^{-1}\circ 	(TR_d^{-1})^* \circ \alpha_Q\colon T(T^*Q)\rightarrow T^*Q\times T^*Q$ is a symplectomorphism between $(T(T^*Q), {\rm d}_T \omega_Q)$ and $(T^*Q\times T^*Q, \Omega_{12})$.
\end{corollary}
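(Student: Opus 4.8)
The plan is to recognize, from the diagram in Fig.~\ref{Diag:RdT*}, that $R_d^{T^*}=\Phi^{-1}\circ\widehat{R_d}\circ\alpha_Q$ is a composition of three maps, each of which is a symplectomorphism for the appropriate pair of canonical symplectic structures, and then to invoke Proposition~\ref{Prop:RdT*} — which already says that $R_d^{T^*}$ is a discretization map on $T^*Q$, hence a local diffeomorphism. Thus the only thing left to check is the single identity $(R_d^{T^*})^*\Omega_{12}={\rm d}_T\omega_Q$, and it suffices to verify it factor by factor.

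First I would name the symplectic forms at the vertices of the diagram: the canonical forms $\omega_{TQ}=-\mathrm{d}\theta_{TQ}$ on $T^*TQ$ and $\omega_{Q\times Q}=-\mathrm{d}\theta_{Q\times Q}$ on $T^*(Q\times Q)$, together with the tangent (complete) lift ${\rm d}_T\omega_Q$ on $T(T^*Q)$, which in the induced coordinates $(q,p,\dot q,\dot p)$ reads $\mathrm{d}q\wedge\mathrm{d}\dot p+\mathrm{d}\dot q\wedge\mathrm{d}p$. Then I would check the three factors. \textbf{(i)} In the coordinates $(q,v,p_q,p_v)$ on $T^*TQ$ one has $\omega_{TQ}=\mathrm{d}q\wedge\mathrm{d}p_q+\mathrm{d}v\wedge\mathrm{d}p_v$, and a direct substitution using $\alpha_Q(q,v,p_q,p_v)=(q,p_v,v,p_q)$ shows that $\alpha_Q$ intertwines $\omega_{TQ}$ with ${\rm d}_T\omega_Q$; this is the classical identification of the iterated bundles (see \cite{LiMarle}, \cite{21MBLDMdD}). \textbf{(ii)} Since $R_d\colon TQ\to Q\times Q$ is a local diffeomorphism onto a neighborhood of the diagonal (Definition~\ref{def:DiscreteMap2} and the remark after it), its cotangent lift $\widehat{R_d}=(TR_d^{-1})^*$ preserves the Liouville one-forms, $\widehat{R_d}^{*}\theta_{Q\times Q}=\theta_{TQ}$, hence $\widehat{R_d}^{*}\omega_{Q\times Q}=\omega_{TQ}$ — the standard fact that cotangent lifts of diffeomorphisms are symplectomorphisms for the canonical structures. \textbf{(iii)} For $\Phi(q_0,p_0;q_1,p_1)=(q_0,q_1,-p_0,p_1)$ a one-line computation gives $\Phi^*\omega_{Q\times Q}=\mathrm{d}q_1\wedge\mathrm{d}p_1-\mathrm{d}q_0\wedge\mathrm{d}p_0=pr_2^*\omega_Q-pr_1^*\omega_Q=\Omega_{12}$, so $\Phi$, and hence $\Phi^{-1}$, is a symplectomorphism between $(T^*Q\times T^*Q,\Omega_{12})$ and $(T^*(Q\times Q),\omega_{Q\times Q})$.

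Assembling the three factors and using functoriality of the pull-back,
$$(R_d^{T^*})^*\Omega_{12}=\alpha_Q^{*}\,\widehat{R_d}^{*}\,(\Phi^{-1})^{*}\,\Omega_{12}=\alpha_Q^{*}\,\widehat{R_d}^{*}\,\omega_{Q\times Q}=\alpha_Q^{*}\,\omega_{TQ}={\rm d}_T\omega_Q,$$
and combining this with the fact that $R_d^{T^*}$ is a (local) diffeomorphism, from Proposition~\ref{Prop:RdT*}, gives the statement.

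The only step carrying genuine content beyond bookkeeping is \textbf{(i)}: identifying the canonical map $\alpha_Q$ and verifying that it carries $\omega_{TQ}$ to the tangent lift ${\rm d}_T\omega_Q$, which requires pinning down the coordinate form of ${\rm d}_T\omega_Q$ and being careful about the orientation of $\alpha_Q$ (the inline definition writes it as $T^*TQ\to TT^*Q$ while Fig.~\ref{Diag:RdT*} uses the opposite arrow; since $\alpha_Q$ is a symplectomorphism, either orientation serves) and about the signs in $\Phi$ and in $\Omega_{12}=pr_2^*\omega_Q-pr_1^*\omega_Q$ — these are precisely what make the three factors fit together.
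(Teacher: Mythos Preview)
Your proof is correct. The paper does not actually give its own proof of this corollary --- it is quoted verbatim from \cite{21MBLDMdD} --- so there is nothing in the present paper to compare your argument against; your factor-by-factor verification that each of $\alpha_Q$, $\widehat{R_d}$, and $\Phi^{-1}$ is a symplectomorphism for the relevant pair of structures is the standard route, and the orientation/sign bookkeeping you flag at the end is handled correctly.
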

\begin{example}\label{example3} On $Q={\mathbb R}^n$ the discretization map 
	$R_d(q,v)=\left(q-\frac{1}{2}v, q+\frac{1}{2}v\right)$ is cotangently lifted to
		$$R_d^{T^*}(q,p,\dot{q},\dot{p})=\left( q-\dfrac{1}{2}\,\dot{q}, p-\dfrac{\dot{p}}{2}; \; q+\dfrac{1}{2}\, \dot{q}, p+\dfrac{\dot{p}}{2}\right)\, .$$
\end{example}

\section{Higher-order discretization maps}\label{sec5}
	In \cite{21MBLDMdD}, the authors show how to lift a discretization map to the tangent and cotangent bundles.	Next, we are going to see how to lift a discretization map to a one on a higher-order tangent bundle.
	
	Let $R_{d}:TQ \rightarrow Q \times Q$ be a discretization map on $Q$, then we can lift it to the map
	$$T^{(k)} {R}_{d}:T^{(k)}(TQ)\rightarrow T^{(k)}Q \times T^{(k)} Q,$$
    defined by $T^{(k)}R_{d}(j_{0}^{(k)}\gamma) = j_{0}^{(k)}(R_{d}\circ \gamma)$ for $\gamma:I\rightarrow TQ$.
 
	Consider the natural equivalence $\Phi^{(k)}:T(T^{(k)}Q) \rightarrow T^{(k)}(TQ)$ defined using the following construction (see \cite{Kolar} or \cite[Sec. V]{cant}): for each $X\in T(T^{(k)}Q)$ there exists a curve $c:\R\rightarrow Q$ such that $X= j_{0}^{(1)}(j_{0}^{(k)}c )$. Then, we have that 
    $$\Phi^{(k)}(X) = j_{0}^{(k)}(j_{0}^{(1)}c ).$$
    The identification between the higher-order tangent bundles $T^{(k)}(TQ) \cong T(T^{(k)}Q)$ allows to define the map
	$R_{d}^{(k)}:T(T^{(k)}Q) \rightarrow T^{(k)}Q \times T^{(k)} Q$ given by $R_{d}^{(k)} = T^{(k)}  {R}_{d} \circ \Phi^{(k)} $.
    
 The following lemma will be useful in the proof of the Theorem below.
 \begin{lemma}\label{lemma}
    Let $F:M\rightarrow N$ be a smooth map and $\gamma_{t}:\R \rightarrow M$ a smooth family of maps, i.e., $\gamma:\R^{2}\rightarrow M$ defined by $\gamma(t,s)=\gamma_{t}(s)$ is a smooth map.
    Then, 
    \begin{equation*}
        \left. \frac{d}{dt} \right|_{t=0} j_{0}^{(k)}(F\circ \gamma_{t}) = (\Phi_{N}^{(k)})^{-1} j_{0}^{(k)}\left(  \left. \frac{d}{dt} \right|_{t=0} (F\circ \gamma_{t}) \right)
    \end{equation*}
    where $\Phi_{N}^{(k)}:T(T^{(k)}N) \rightarrow T^{(k)}(TN)$ is the canonical identification.
\end{lemma}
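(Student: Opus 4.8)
The plan is to deduce the identity directly from the defining property of the canonical identification $\Phi_N^{(k)}$, applied to one specific tangent vector. Put $c := F\circ\gamma\colon \R^2\to N$, a smooth two-parameter family with $c(t,s) = F(\gamma_t(s))$; since jet extraction in the second variable depends smoothly on the first, the assignment $t\mapsto j_0^{(k)}(F\circ\gamma_t) = j_0^{(k)}(c(t,\cdot))$ is a smooth curve in $T^{(k)}N$, and so $X := \left.\frac{d}{dt}\right|_{t=0} j_0^{(k)}(F\circ\gamma_t)$ is a well-defined element of $T(T^{(k)}N)$. In the notation used to introduce $\Phi^{(k)}$, this is precisely an element of the form $j_0^{(1)}\big(j_0^{(k)}c\big)$, with $c$ as the representing family.

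Now I would apply the definition of $\Phi^{(k)}$ (with $Q = N$) to this representative. By construction, $\Phi_N^{(k)}$ exchanges the operation ``take the $k$-jet at $0$ in the curve parameter'' with the operation ``differentiate at $0$ in the family parameter'', i.e. $\Phi_N^{(k)}\big(j_0^{(1)}(j_0^{(k)}c)\big) = j_0^{(k)}\big(j_0^{(1)}c\big)$, where $j_0^{(1)}c$ is the curve $s\mapsto \left.\frac{d}{dt}\right|_{t=0} c(t,s)$ in $TN$. But that curve is exactly $\left.\frac{d}{dt}\right|_{t=0}(F\circ\gamma_t)$, so $\Phi_N^{(k)}(X) = j_0^{(k)}\!\left(\left.\frac{d}{dt}\right|_{t=0}(F\circ\gamma_t)\right)$. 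Since $\Phi_N^{(k)}$ is a diffeomorphism, composing with $(\Phi_N^{(k)})^{-1}$ yields the claimed equality. This step uses only that $\Phi^{(k)}$ is well defined, i.e. independent of the chosen representing family, which is the content of the construction cited when $\Phi^{(k)}$ is introduced.

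If a more concrete argument is preferred, the same conclusion follows by a short computation in natural coordinates: writing $x^A(t,s)$ for the coordinates of $F(\gamma_t(s))$ in a chart on $N$, one checks that, in the induced coordinates on $T(T^{(k)}N)$ and on $T^{(k)}(TN)$, both sides carry the data $\big\{\tfrac{1}{r!}\,\partial_s^r x^A|_{(0,0)}\big\}_{0\le r\le k}$ and $\big\{\tfrac{1}{r!}\,\partial_t\partial_s^r x^A|_{(0,0)}\big\}_{0\le r\le k}$, the only nontrivial point being the interchange of the $t$- and $s$-derivatives, which is Schwarz's theorem on mixed partials. I do not expect any genuine obstacle: the one thing requiring care is the bookkeeping between the two notations for jets and parameter derivatives, so that the family $F\circ\gamma$ is correctly recognized as a representative of $X$ in the definition of $\Phi_N^{(k)}$; once this matching is in place the statement is immediate.
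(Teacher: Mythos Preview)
Your argument is correct and follows essentially the same route as the paper: both recognize the left-hand side as $j_0^{(1)}\big(j_0^{(k)}(F\circ\gamma_t)\big)$ and then invoke the defining property of $\Phi_N^{(k)}$ to swap the order of the jets. The paper's proof is simply a terse version of your first two paragraphs, without the coordinate verification you add at the end.
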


\begin{proof}
    As 
    $$\left. \frac{d}{dt} \right|_{t=0} j_{0}^{(k)}(F\circ \gamma_{t}) = j_{0}^{(1)} (j_{0}^{(k)}(F\circ \gamma_{t}))\, ,$$
    using the natural equivalence $\Phi^{(k)}_N$
    $$\left. \frac{d}{dt} \right|_{t=0} j_{0}^{(k)}(F\circ \gamma_{t}) = (\Phi_{N}^{(k)})^{-1}\left(j_{0}^{(k)} (j_{0}^{(1)}(F\circ \gamma_{t}))\right),$$
    the result follows.
\end{proof}

    Now, we can prove that the map $R_{d}^{(k)}$ is a discretization map on the higher-order bundle $T^{(k)}Q$.
	
	\begin{theorem}\label{k:tangent:lift}
		Let $R_{d}$ be a discretization map on $Q$, the lift to the higher-order tangent bundle $R_{d}^{(k)}:T(T^{(k)}Q) \rightarrow T^{(k)}Q \times T^{(k)} Q$ is a discretization map on $T^{(k)}Q$.
	\end{theorem}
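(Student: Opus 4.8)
The plan is to verify the two defining properties of a discretization map (Definition~\ref{def:DiscreteMap2}) directly for $R_d^{(k)} = T^{(k)}R_d \circ \Phi^{(k)}$, writing $R_d^{(k)} = (R^{(k),1}, R^{(k),2})$ where $R^{(k),a} = T^{(k)}R^a \circ \Phi^{(k)}$ for $a = 1,2$. For the first property, I would take a point $w = j_0^{(k)}c \in T^{(k)}Q$ sitting inside the zero section of $T(T^{(k)}Q)$, i.e. the element $j_0^{(1)}(j_0^{(k)}c)$ where the curve is constant in the $s$-parameter. Pushing through $\Phi^{(k)}$ gives $j_0^{(k)}(j_0^{(1)}c)$ with the inner curve constant, and then $T^{(k)}R_d$ sends this to $j_0^{(k)}(R_d \circ (\text{constant velocity-}0\text{ curve}))$. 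Since $R_d(q,0) = (q,q)$, each component reduces to $j_0^{(k)}c$, so $R_d^{(k)}(0_w) = (w,w)$, as required. This part is essentially bookkeeping with jets.

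The second and substantive property is that $T_{0_w}R^{(k),2}_w - T_{0_w}R^{(k),1}_w$ equals the identity on $T_w T^{(k)}Q \simeq T^{(k)}_q Q$ (using the canonical identification of the tangent space to a vector space at the zero vector with the vector space itself — here applied fibrewise). The strategy is to compute each $T_{0_w}R^{(k),a}_w$ by differentiating a one-parameter family. Given a tangent vector $V \in T_{0_w}(T^{(k)}_q Q \cap \text{fibre})$, represent it as $\left.\frac{d}{dt}\right|_{t=0} w_t$ for a smooth family $w_t = j_0^{(k)} c_t$ of $k$-jets over the same base point, where $w_0 = 0_w$. Then $R^{(k),a}_w(w_t) = T^{(k)}R^a(\Phi^{(k)}(w_t))$, and I would apply Lemma~\ref{lemma} with $F = R^a$ and $\gamma_t = c_t$ (viewed as a family of curves into $Q$) to move the $t$-derivative inside: $\left.\frac{d}{dt}\right|_{t=0} j_0^{(k)}(R^a \circ c_t) = (\Phi_Q^{(k)})^{-1} j_0^{(k)}\!\left(\left.\frac{d}{dt}\right|_{t=0}(R^a \circ c_t)\right)$. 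The inner quantity $\left.\frac{d}{dt}\right|_{t=0}(R^a \circ c_t)$ is, pointwise in the curve parameter $s$, the tangent map $T_{0}R^a_{c_0(s)}$ applied to the variation vector, and its $k$-jet at $0$ captures exactly the $k$-jet prolongation of $T_{0}R^a_q$ applied to $V$. Subtracting the $a=1$ and $a=2$ expressions and using property (2) of Definition~\ref{def:DiscreteMap2} for $R_d$ — namely $T_{0_q}R^2_q - T_{0_q}R^1_q = \mathrm{Id}_{T_qQ}$ — the difference collapses to the $k$-jet prolongation of the identity on $T_qQ$, which is the identity on $T^{(k)}_q Q$. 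The identification $T(T^{(k)}Q) \cong T^{(k)}(TQ)$ via $\Phi^{(k)}$ must be tracked carefully so that the ``$\simeq$'' in the statement matches the one used for $R_d$.

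The main obstacle I anticipate is purely notational-geometric: keeping straight the several canonical identifications in play — $T_{0_q}T_qQ \simeq T_qQ$, $T(T^{(k)}Q) \cong T^{(k)}(TQ)$, and the way a variation of $k$-jets decomposes into jets of variations — and confirming that differentiation of $\Phi^{(k)}$-composites commutes the way Lemma~\ref{lemma} asserts, so that the linearization of $R_d^{(k)}$ is genuinely the ``$k$-jet prolongation'' of the linearization of $R_d$. Once that functoriality is in hand, the algebra is immediate: prolongation is $\mathbb{R}$-linear, so it respects the subtraction, and it sends the identity to the identity. I would therefore organize the proof as: (i) fix the identifications and state that $\Phi^{(k)}$ intertwines the relevant structures; (ii) check property (1) by the constant-curve computation; (iii) compute $T_{0_w}R^{(k),a}_w$ via Lemma~\ref{lemma}; (iv) subtract and invoke property (2) for $R_d$; (v) conclude $R_d^{(k)}$ is a local diffeomorphism near the zero section, hence a discretization map on $T^{(k)}Q$.
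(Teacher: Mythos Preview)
Your proposal is correct and follows essentially the same route as the paper's proof: both verify property~(1) by pushing the zero section through $\Phi^{(k)}$ and using $R_d\circ\hat 0=\mathrm{Id}_Q\times\mathrm{Id}_Q$, and both establish property~(2) by differentiating a one-parameter family, invoking Lemma~\ref{lemma} to commute the $t$-derivative with the $k$-jet, and then applying property~(2) of $R_d$ on the base. The only cosmetic difference is that the paper works with the combination $R_d^2-R_d^1$ directly (writing $T^{(k)}(R_d^2-R_d^1)$ and applying the lemma once), whereas you treat each component $R^{(k),a}$ separately and subtract afterward; also note that when you invoke Lemma~\ref{lemma} the family $\gamma_t$ must be a family of curves into $TQ$ (representing $\Phi^{(k)}(tX_z)\in T^{(k)}(TQ)$), not into $Q$ as you wrote.
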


    \begin{proof}
        Let $\Phi^{(k)}:T(T^{(k)}Q)\rightarrow T^{(k)}TQ$ be the diffeomorphism identifying both manifolds.

        First, we shall prove that given $z\in T^{(k)}Q$, we have that $R_{d}^{(k)}(0_{z})=(z,z)$, where $0_z$ is the zero section of the bundle $T(T^{(k)}Q)\rightarrow T^{(k)}Q$.

        The image of the zero section under $\Phi^{(k)}$ is the $k$-th jet lift of the zero section on $Q$, that is, $\Phi^{(k)}(0_{z}) = T^{(k)}\hat{0}(z)$, where $\hat{0}:Q\rightarrow TQ$,  as it is easily checked choosing natural coordinates on the higher-order tangent bundle. Thus, $R_{d}^{(k)}(0_{z}) = T^{(k)} {R}_{d}(j^{(k)}\hat{0}(z))$.

        Using the definition of the $k$-th jet lift
        $$T^{(k)} {R}_{d}(j^{(k)}\hat{0}(z)) = j_{0}^{(k)}(R_{d}\circ \hat{0})(z).$$
        In addition, since $R_{d}$ is a discretization map, we have that $R_{d}\circ \hat{0}={\rm Id}_{Q}\times {\rm Id}_{Q}$. Hence,
        \begin{equation*}
            \begin{split}
                R_{d}^{(k)}(0_{z}) & = T^{(k)}({\rm Id}_{Q}\times {\rm Id}_{Q})(z) \\
                & = ({\rm Id}_{T^{(k)}Q}\times {\rm Id}_{T^{(k)}Q}) (z)=(z,z).
            \end{split}
        \end{equation*}

        Next, let $R_{d,z}^{(k)}$ be the restriction of $R_{d}^{(k)}$ to the space $T_{z}(T^{(k)}Q)$, where $z\in T^{(k)}Q$. We can write $R_{d,z}^{(k)}=T^{(k)} {R}_{d} \circ \Phi_{z}^{(k)}$, where $\Phi_{z}^{(k)}$ is the restriction of $\Phi^{(k)}$ to $T_{z}(T^{(k)}Q)$.
        
        Moreover, if $R_{d,z}^{(k),a}$ denotes the composition of $R_{d,z}^{(k)}$ with the projection onto the ath-factor, $a=1,2$, then we will prove that $T_{0_{z}}R_{d,z}^{(k),2}(X_{z}) - T_{0_{z}}R_{d,z}^{(k),1}(X_{z}) = X_{z}$ for all $X_{z} \in T_{z}(T^{(k)}Q)$ and $z\in T^{(k)}Q$ under the identification $T_{0_{z}}T_{z}(T^{(k)}Q)\equiv T_{z}(T^{(k)}Q)$.           
        We have that
        {$$\left( R_{d,z}^{(k),2} - R_{d,z}^{(k),1}\right)(X_{z}) = \left( T^{(k)} {R}_{d}^{2} - T^{(k)} {R}_{d}^{1} \right)\circ \Phi_{z}^{(k)}(X_{z})\, .$$}
        Therefore,
        \begin{equation*}
            \begin{split}
                \left. \frac{d}{dt} \right|_{t=0} & \left( R_{d,z}^{(k),2} - R_{d,z}^{(k),1}\right)(tX_{z}) \\
                & = \left. \frac{d}{dt} \right|_{t=0} \left( T^{(k)} {R}_{d}^{2} - T^{(k)} {R}_{d}^{1} \right)\circ \Phi_{z}^{(k)}(tX_{z})\\
                & = \left. \frac{d}{dt} \right|_{t=0} T^{(k)} \left(  {R}_{d}^{2} - {R}_{d}^{1} \right)\circ \Phi_{z}^{(k)}(tX_{z})\\
                & = \left. \frac{d}{dt} \right|_{t=0} j_{0}^{(k)} \left(  ({R}_{d}^{2} - {R}_{d}^{1} )(tY(q)) \right),
            \end{split}
        \end{equation*}
        where $q=\pi^{k}_{0}(z)$, $Y(q)\in TQ$ is a curve such that $j_{0}^{(k)}Y = \Phi_{z}^{(k)}(X_{z})$. Moreover, if $\tilde{\pi}_{0}^{k}\colon T^{(k)}TQ \rightarrow TQ$ and using $\tau_{Q}\circ T\pi_{0}^{k} = \pi_{0}^{k}\circ \tau_{T^{(k)}Q}$,  then $Y(q)\in T_{q}Q$  and $Y = \tilde{\pi}_{0}^{k}(\Phi^{(k)}(X_{z}))= T\pi_{0}^{k}(X_{z})$ because the diagram in Fig.~$2$ is commutative.
        \begin{figure}[htb!]
            \centering
            \begin{tikzcd}
T(T^{(k)}Q) \arrow[rr, "\Phi^{k}"] \arrow[d, "\tau_{T^{(k)}Q}"'] \arrow[rrd, "T\pi_{0}^{k}"] &   & T^{(k)}TQ \arrow[d, "\tilde{\pi}_{0}^{k}"] \\
T^{(k)}Q \arrow[rd, "\pi_{0}^{k}"']                                                      &   & TQ \arrow[ld, "\tau_{Q}"]                  \\
                                                                                       & Q &                                           
\end{tikzcd}
    \caption{Commutative diagram.}
        \end{figure}

        Using Lemma \ref{lemma} we have that
        \begin{equation*}
            \begin{split}
                \left. \frac{d}{dt} \right|_{t=0} & \left( R_{d,z}^{(k),2} - R_{d,z}^{(k),1}\right)(tX_{z}) \\
                & =  (\Phi_{z}^{k})^{-1} j_{0}^{k}\left(  \left. \frac{d}{dt} \right|_{t=0}({R}_{d,q}^{2} - {R}_{d,q}^{1}) (tY(q))\right)\, .
            \end{split}
        \end{equation*}

        Using the second property from discretization maps, we obtain
        \begin{equation*}
            \begin{split}
                \left. \frac{d}{dt} \right|_{t=0} & \left( R_{d,z}^{(k),2} - R_{d,z}^{(k),1}\right)(tX_{z}) \\
                & =  (\Phi_{z}^{k})^{-1} j_{0}^{k}(Y) = X_{z},
            \end{split}
        \end{equation*}
        where the last step follows from the definition of $Y$.
        \end{proof}

	\begin{example}\label{Ex:midpoint}
		Consider the midpoint discretization map
		$$R_{d}(q, v) = \left( q-\frac{1}{2}v, q + \frac{1}{2}v \right). $$
		
		The lift of the midpoint to $T(TQ)$ is
		$$TR_{d} (q, v, \dot{q}, \dot{v}) = \left(q-\frac{1}{2} v, q + \frac{1}{2} v, \dot{q} -\frac{1}{2}\dot{v}, \dot{q} + \frac{1}{2}\dot{v}\right)$$
		and the second lift to $T^{(2)}(TQ)$ is
		$$T^{(2)}R_{d} (q, v; \dot{q}, \dot{v}; \ddot{q},\ddot{v}) =$$ $$\left(q-\frac{1}{2} v, q + \frac{1}{2} v, \dot{q} -\frac{1}{2}\dot{v}, \dot{q} + \frac{1}{2}\dot{v}, \ddot{q}-\frac{1}{2}\ddot{v}, \ddot{q} + \frac{1}{2}\ddot{v}\right)$$

		Under the natural equivalence between higher-order tangent bundles, the map $R_{d}^{(2)}: T(T^{(2)}Q)\rightarrow T^{(2)}Q \times T^{(2)}Q$ is given by
		$$R_{d}^{(2)}(q,\dot{q},\ddot{q}; v,\dot{v}, \ddot{v})=$$ $$\left(q-\frac{1}{2}v, \dot{q}-\frac{1}{2}\dot{v},\ddot{q}-\frac{1}{2}\ddot{v}; q+\frac{1}{2}v, \dot{q}+\frac{1}{2}\dot{v},\ddot{q}+\frac{1}{2}\ddot{v}\right)\, .$$
		
		Then
		$R_{d}^{(2)}(q,\dot{q},\ddot{q}; 0,0,0)=\left(q,\dot{q},\ddot{q}; q,\dot{q},\ddot{q} \right)$,
		
        $$T_{0_{(q,\dot{q},\ddot{q})}} R_{d, (q,\dot{q},\ddot{q})}^{(2),2} = \begin{bmatrix}
            1/2 & 0 & 0 \\
            0 & 1/2 & 0 \\
            0 & 0 & 1/2
        \end{bmatrix}\, ,$$
        $$T_{0_{(q,\dot{q},\ddot{q})}} R_{d, (q,\dot{q},\ddot{q})}^{(2),1} = \begin{bmatrix}
            -1/2 & 0 & 0 \\
            0 & -1/2 & 0 \\
            0 & 0 & -1/2
        \end{bmatrix}\, .$$
		Therefore, $T_{0_{(q,\dot{q},\ddot{q})}}R_{d, (q,\dot{q},\ddot{q})}^{(2),2} - T_{0_{(q,\dot{q},\ddot{q})}}R_{d, (q,\dot{q},\ddot{q})}^{(2),1} = Id,$  and $R_{d}^{(2)}$ is a discretization map under the suitable identifications.
	\end{example}

 \begin{example}
     Consider the initial point discretization map on the sphere $R_d:T\mathbb{S}^{2} \rightarrow \mathbb{S}^{2} \times \mathbb{S}^{2}$
     \begin{equation*}
         R_d (q, \xi)=\left(q, \frac{q+\xi}{\|q+\xi\|}\right).
     \end{equation*}
     The lift to $T(T\mathbb{S}^{2})$ is the map $TR_d\colon T(T\mathbb{S}^2)\rightarrow T\mathbb{S}^2\times T\mathbb{S}^2$:
		$$TR_{d} (q, \xi, \dot{q}, \dot{\xi}) = \left(q, \dot{q}, \frac{q+\xi}{\|q+\xi\|}, \frac{\dot{q}+\dot{\xi}}{\|q+\xi\|} - \frac{\xi\cdot\dot{\xi}(q+\xi)}{\|q+\xi\|^{3}}\right)$$
    and the second lift $T^{(2)}(T\mathbb{S}^{2})$ is
    \begin{equation*}
        \begin{split}
            T^{(2)}R_{d} & (q, \xi, \dot{q}, \dot{\xi}, \ddot{q}, \ddot{\xi}) =\left(TR_{d}(q, \xi, \dot{q}, \dot{\xi}) , \ddot{q}, \frac{\ddot{q}+\ddot{\xi}}{\|q+\xi\|} \right. \\
            & - \frac{2\xi\cdot\dot{\xi}(\dot{q}+\dot{\xi}) + (\dot{\xi}\cdot\dot{\xi} + \xi \cdot \ddot{\xi})(q + \xi)}{\|q+\xi\|^{3}} \\
            & \left. + \frac{3\xi\cdot\dot{\xi}(q+\xi)}{\|q+\xi\|^{5}}\right).
        \end{split}
    \end{equation*}
    Composing with the natural identifications, we obtain a discretization map on $T^{(2)}\mathbb{S}^{2}$.
 \end{example}

 \begin{corollary} Let $R_{d}^{(k)}:T(T^{(k)}Q) \rightarrow T^{(k)}Q \times T^{(k)} Q$ be a higher-order discretization map on $T^{(k)}Q$. The cotangent lift $\left(R_{d}^{(k)}\right)^{T^*}:T(T^*(T^{(k)}Q)) \rightarrow T^*(T^{(k)}Q) \times T^*(T^{(k)}Q)$ is discretization map on $T^*(T^{(k)}Q)$. \label{corol:kRdT*}     
 \end{corollary}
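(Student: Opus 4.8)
\textit{Approach.} The plan is to observe that Corollary~\ref{corol:kRdT*} is a direct instance of Proposition~\ref{Prop:RdT*}, applied to the manifold $M:=T^{(k)}Q$ in place of $Q$. Indeed, by Theorem~\ref{k:tangent:lift} the map $R_d^{(k)}\colon T(T^{(k)}Q)\to T^{(k)}Q\times T^{(k)}Q$ is a discretization map on the (ordinary, finite-dimensional, smooth) manifold $T^{(k)}Q$, which has dimension $(k+1)\dim Q$; and the cotangent-lift construction recalled in Section~\ref{sec4} (the diagram of Fig.~\ref{Diag:RdT*}) uses nothing about the base beyond the fact that it is a smooth manifold together with the fact that the discretization map is a local diffeomorphism from a neighbourhood of the zero section. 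Both hypotheses hold here, so the construction goes through verbatim with $Q\rightsquigarrow T^{(k)}Q$.

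\textit{Step 1: check the hypotheses.} First I would record that, being a discretization map on $M=T^{(k)}Q$, the map $R_d^{(k)}$ is in particular a local diffeomorphism around the zero section of $TM$ (as noted after Definition~\ref{def:DiscreteMap2}). Hence the three symplectomorphisms entering Fig.~\ref{Diag:RdT*} are well defined with $M$ in the role of $Q$: the canonical symplectomorphism $\alpha_M\colon T^*TM\to TT^*M$, the cotangent lift $\widehat{R_d^{(k)}}\colon T^*TM\to T^*(M\times M)$ of $(R_d^{(k)})^{-1}$ as in Definition~\ref{def:colift}, and the symplectomorphism $\Phi_M\colon T^*M\times T^*M\to T^*(M\times M)$, $\Phi_M(q_0,p_0;q_1,p_1)=(q_0,q_1,-p_0,p_1)$.

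\textit{Step 2: assemble and conclude.} Next I would define $\bigl(R_d^{(k)}\bigr)^{T^*}:=\Phi_M^{-1}\circ\widehat{R_d^{(k)}}\circ\alpha_M\colon T(T^*(T^{(k)}Q))\to T^*(T^{(k)}Q)\times T^*(T^{(k)}Q)$ and invoke Proposition~\ref{Prop:RdT*} directly, whose conclusion is precisely that this map is a discretization map on $T^*M=T^*(T^{(k)}Q)$. By the corollary following Proposition~\ref{Prop:RdT*}, it is moreover a symplectomorphism between $(T(T^*M),\mathrm{d}_T\omega_M)$ and $(T^*M\times T^*M,\Omega_{12})$ with $\Omega_{12}=pr_2^*\omega_M-pr_1^*\omega_M$, which is what makes it suitable for constructing symplectic integrators for the higher-order systems of Section~\ref{sec3}.

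\textit{Main obstacle.} There is essentially no obstacle beyond bookkeeping: the only point to verify is that Proposition~\ref{Prop:RdT*} was established for an \emph{arbitrary} smooth manifold as base --- it was --- so no new computation is required, merely the substitution $Q\rightsquigarrow T^{(k)}Q$ and a check that nothing in the proof used dimension or the absence of extra bundle structure. If one preferred a self-contained argument, one could instead verify the two axioms of Definition~\ref{def:DiscreteMap2} for $\bigl(R_d^{(k)}\bigr)^{T^*}$ directly in local bundle coordinates $(q^{(0)},\dots,q^{(k)},p_{(0)},\dots,p_{(k)})$ on $T^*(T^{(k)}Q)$; but this simply re-derives the content of Proposition~\ref{Prop:RdT*} in that chart, so the first route is preferable.
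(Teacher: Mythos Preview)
Your proposal is correct and follows essentially the same route as the paper: the paper's proof simply notes that the bottom line of the diagram in Fig.~\ref{Diag:RdT*} can be replaced by $R_d^{(k)}$ (a discretization map on $T^{(k)}Q$ by Theorem~\ref{k:tangent:lift}) and then invokes Proposition~\ref{Prop:RdT*} with $Q$ replaced by $T^{(k)}Q$. Your write-up is somewhat more detailed in spelling out the three symplectomorphisms and the hypotheses, but the argument is the same substitution $Q\rightsquigarrow T^{(k)}Q$.
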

 \begin{proof}
 In Fig.~\ref{Diag:RdT*} the discretization map at the bottom line can be replaced by the higher-order discretization $R^{(k)}_d$, whose existence has been proved in Theorem \ref{k:tangent:lift}. Such a map can be cotangently lifted as in Proposition~\ref{Prop:RdT*} to obtain the following discretization map $\left( R^{(k)}_d\right)^{T^*}\colon T\left(T^{*}(T^{(k)}Q)\right) \rightarrow T^{*}(T^{(k)}Q)\times T^{*}(T^{(k)}Q)$.
 \end{proof}
	
	\subsection{Geometric integrators on the higher-tangent bundle}
	
	The framework for the construction of geometric integrators is established by Proposition 5.1 in \cite{21MBLDMdD} wich reads:
	
	\begin{proposition}\label{Prop:symplectic:integrator}
		If $R_{d}$ is a discretization map on $Q$ and $H:T^{*}Q \rightarrow \R$ is a Hamiltonian function, then the equation
        \begin{align*}
            &(R_{d}^{T^*})^{-1}(q_{0},p_{0},q_{1},p_{1}) =\\ & \sharp_{\omega} \left( h dH \left[ \tau_{T^{*}Q} \circ (R_{d}^{T^*})^{-1}(q_{0},p_{0},q_{1},p_{1}) \right] \right)
        \end{align*}
        written for the cotangent lift of $R_{d}$ is a symplectic integrator.
	\end{proposition}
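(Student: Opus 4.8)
The plan is to read the defining equation as a prescription for a submanifold of $T^*Q\times T^*Q$ and to show that this submanifold is the graph of a symplectic diffeomorphism, using the characterization of symplectic maps by the Lagrangian property of their graphs. To begin, I would unwind the notation: setting $v:=(R_d^{T^*})^{-1}(q_0,p_0,q_1,p_1)\in T(T^*Q)$ and $m:=\tau_{T^*Q}(v)$, and recalling $\sharp_\omega(dH)=X_H$, the equation becomes $v=h\,X_H(m)=h\,X_H(\tau_{T^*Q}(v))$; equivalently, $v$ lies in the image of the section $s:=h\,X_H\colon T^*Q\to T(T^*Q)$. Hence the set of tuples $(q_0,p_0,q_1,p_1)$ satisfying the equation is exactly $\mathcal S_h:=R_d^{T^*}\big(s(T^*Q)\big)\subset T^*Q\times T^*Q$, and the statement to prove is that $\mathcal S_h$ is the graph of a symplectic diffeomorphism $\Psi_h\colon T^*Q\to T^*Q$.

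Next I would check that $\mathcal S_h$ is a graph of a diffeomorphism for $h$ small. At $h=0$ the section $s$ reduces to the zero section of $T(T^*Q)$, and by the first property of a discretization map (Proposition~\ref{Prop:RdT*}) the map $R_d^{T^*}$ sends it to the diagonal of $T^*Q\times T^*Q$, i.e.\ the graph of ${\rm Id}_{T^*Q}$. Since $R_d^{T^*}$ is a local diffeomorphism on a neighbourhood of the zero section and, by the second property of a discretization map, $T_0(R_d^{T^*})^2-T_0(R_d^{T^*})^1={\rm Id}$, the implicit function theorem (this is the construction of an integrator from a discretization map already used in \cite{21MBLDMdD}) yields, for $h$ small, $\mathcal S_h=\mathrm{graph}(\Psi_h)$ with $\Psi_h={\rm Id}_{T^*Q}+h\,X_H+O(h^2)$; in particular the equation does define a one-step method for $\dot m=X_H(m)$ which is consistent, of order at least one.

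It remains to prove that $\Psi_h$ is symplectic, and here the earlier results do the work. By the corollary stated just after Proposition~\ref{Prop:RdT*}, $R_d^{T^*}$ is a symplectomorphism from $(T(T^*Q),d_T\omega_Q)$ onto $(T^*Q\times T^*Q,\Omega_{12})$, where $\Omega_{12}=pr_2^*\omega_Q-pr_1^*\omega_Q$. So it suffices to show that $s(T^*Q)=h\,X_H(T^*Q)$ is a Lagrangian submanifold of $(T(T^*Q),d_T\omega_Q)$. It is middle dimensional, so only isotropy must be verified, and this is immediate from the classical identity for the tangent (complete) lift of a form: for a vector field $Y$ on a manifold $M$, viewed as a section $M\to TM$, one has $Y^*(d_T\beta)=\mathcal L_Y\beta$. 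Thus $s^*(d_T\omega_Q)=\mathcal L_{hX_H}\omega_Q=d\big(\imath_{hX_H}\omega_Q\big)=d(h\,dH)=0$, using Cartan's formula, $d\omega_Q=0$, $\imath_{X_H}\omega_Q=dH$, the fact that $h$ is a constant, and $d\circ d=0$. Therefore $\mathcal S_h=R_d^{T^*}(s(T^*Q))$ is Lagrangian in $(T^*Q\times T^*Q,\Omega_{12})$; writing $\iota\colon T^*Q\to T^*Q\times T^*Q$, $\iota(m)=(m,\Psi_h(m))$, for the inclusion of the graph gives $0=\iota^*\Omega_{12}=\Psi_h^*\omega_Q-\omega_Q$, i.e.\ $\Psi_h^*\omega_Q=\omega_Q$. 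Hence $\Psi_h$ is symplectic and the method is a symplectic integrator.

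The step I expect to be the main obstacle is the second one: turning ``$\mathcal S_h$ is the diagonal at $h=0$ and $R_d^{T^*}$ is a local diffeomorphism near the zero section'' into ``$\mathcal S_h$ is globally the graph of a diffeomorphism of $T^*Q$'' is really a local statement and must be phrased carefully (restricting to compact subsets with $h$ sufficiently small, or quoting the integrator construction of \cite{21MBLDMdD} verbatim). By contrast, the genuinely geometric content --- that a Hamiltonian vector field has Lagrangian image for the tangent-lifted symplectic form $d_T\omega_Q$, and that the cotangent lift $R_d^{T^*}$ is a symplectomorphism --- is already packaged in the results preceding this proposition, so the remaining effort is essentially bookkeeping with the complete-lift identity and Cartan's formula.
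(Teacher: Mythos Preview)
The paper does not actually prove this proposition: it is quoted verbatim as ``Proposition~5.1 in \cite{21MBLDMdD}'' and no argument is supplied here. So there is no in-paper proof to compare against.

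That said, your argument is correct and is the natural one (and is essentially the approach of \cite{21MBLDMdD}). The reduction of the scheme to $v=hX_H(\tau_{T^*Q}(v))$, i.e.\ to the image of the section $s=hX_H$, is right; the identity $Y^*(d_T\omega_Q)=\mathcal L_Y\omega_Q$ for a vector field $Y$ viewed as a section $T^*Q\to T(T^*Q)$ gives $s^*(d_T\omega_Q)=0$, so $s(T^*Q)$ is Lagrangian; and the Corollary after Proposition~\ref{Prop:RdT*} transports this to a Lagrangian submanifold of $(T^*Q\times T^*Q,\Omega_{12})$, which is equivalent to symplecticity of $\Psi_h$ once it is a graph. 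Your caveat about the graph step is also well placed: the statement is local (small $h$, a neighbourhood of a compact set), exactly as in the integrator construction of \cite{21MBLDMdD}, and should be phrased that way.
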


	The previous proposition adapts perfectly to our case since a higher-order Lagrangian on $T(T^{(k)}Q)$ has the corresponding Hamiltonian function on $T^{*}(T^{(k)}Q)$. The cotangent lift in Proposisition~\ref{Prop:RdT*} can be replaced by the higher-order cotangent lift in Corollary~\ref{corol:kRdT*}. As a result, we have constructed a symplectic integrator for the Hamiltonian version of the higher-order dynamics.


	\section{Application to Optimal Control problems}\label{sec6}
	
	
 Suppose that on $Q = \R^{n}$ we have the optimal control problem with cost functional
	$$\mathcal{J}=\int_{0}^{T} \frac{1}{2}||u||^{2} \ dt$$
	subjected to the Euler-Lagrange controlled dynamics
	$\ddot{q} = u.$
	
	This problem can be recasted as the second-order variational problem $\mathcal{J}=\int_{0}^{T} \frac{1}{2} ||\ddot{q}||^{2} \ dt$
with the second-order Lagrangian 
	$\mathcal{L} =\frac{1}{2} ||\ddot{q}||^{2}$ on $T^{(2)}Q$.
	Necessary conditions  for a trajectory to be optimal is to fulfill the second-order Euler-Lagrange equations, which in this case give the spline equations
	$q^{(4)} =0.$
	However, as described in Section~\ref{sec3}, the Hamiltonian for this second-order Lagrangian system is defined on $T^*TQ$:
	$$H(q,\dot{q},\hat{p}_{(0)},\hat{p}_{(1)}) = \frac{1}{2}\hat{p}_{(1)}^{2} + \hat{p}_{(0)}\dot{q}.$$
 A discretization map on $T^*TQ=T^*(T^{(1)}Q)$ is obtained by cotangently lifting a first-order discretization map on $TQ$, that corresponds with the tangent lift of a discretization map on $Q$ as defined in~\cite{21MBLDMdD}.

	As in Example~\ref{Ex:midpoint}, the midpoint discretization map
	$R_{d}(q, \dot{q}) = \left( q-\frac{1}{2}\dot{q}, q + \frac{1}{2}\dot{q} \right)$ is used to define $R^{(1)}_d\colon T(TQ)\rightarrow TQ\times TQ$.
	
	The first-order cotangent lift of the midpoint on $T^*(TQ)$ is a discretization map as proved in Corollary~\ref{corol:kRdT*}: 
 \begin{align*}\left( R^{(1)}_d\right)^{T^*}&(q,\dot{q},p_{0},p_{1};\dot{q},\ddot{q},\dot{p}_{(0)},\dot{p}_{(1)})= \\ &\left( q-\frac{1}{2}\,\dot{q},\dot{q}-\frac{1}{2}\,\ddot{q}, p_{(0)}-\frac{\dot{p}_{(0)}}{2}, p_{(1)}-\frac{\dot{p}_{(1)}}{2}; \right. \\ & \left. q+\frac{1}{2}\,\dot{q},\dot{q}+\frac{1}{2}\,\ddot{q}, p_{(0)}+\frac{\dot{p}_{(0)}}{2}, p_{(1)}+\frac{\dot{p}_{(1)}}{2}\right)\, .\end{align*}
	
	As the Hamiltonian vector field takes values in  $T(T^*TQ)$, by Proposition~\ref{Prop:symplectic:integrator}, $\left(R_{d}^{(1)}\right)^{T^*}$ generates the following symplectic numerical scheme on $T^*TQ$: 
    \begin{equation*}
        \begin{split}
            & \frac{q_{1}-q_{0}}{h} =  \frac{\dot{q}_{1}+\dot{q}_{0}}{2},\quad
            \frac{\dot{q}_{1}-\dot{q}_{0}}{h} =  \frac{\hat{p}_{(1)1}+\hat{p}_{(1)0}}{2},\\
            & \frac{\hat{p}_{(0)1}-\hat{p}_{(0)0}}{h} =  0,\quad
             \frac{\hat{p}_{(1)1}-\hat{p}_{(1)0}}{h} = - \frac{\hat{p}_{(0)1}+\hat{p}_{(0)0}}{2}.
        \end{split}
    \end{equation*}
	Working out the expressions we obtain:
    \begin{equation*}
        \begin{split}
            & q_{1} = q_{0} + h\dot{q}_{0} + \frac{h^{2}}{2} \hat{p}_{(1)0} + \frac{h^{3}}{4} \hat{p}_{(0)0}, \\
            & \dot{q}_{1} =  \dot{q}_{0} + h \hat{p}_{(1)0} + \frac{h^{2}}{2} \hat{p}_{(0)0}, \\
            & \hat{p}_{(0)1} =  \hat{p}_{(0)0}, \quad
             \hat{p}_{(1)1} = \hat{p}_{(1)0} - h \hat{p}_{(0)0}.
        \end{split}
    \end{equation*}

	\subsection{Obstacle avoidance problem}
The following application is an optimal control problem with obstacle avoidance,	which is usually cast as a second-order variational problem of the form
	$$\int_{0}^{T} \left( \frac{1}{2} ||\ddot{q}||^{2} + V(q) \right) \ dt$$ (see \cite{BlCaCoCDC}, \cite{BlCaCoIJC}, \cite{goodman}).   
	The second order Lagrangian is in this case $\mathcal{L} =\frac{1}{2} ||\ddot{q}||^{2}+V(q)$ and the necessary equations for a trajectory to be optimal is the fulfilment of the Euler-Lagrange equations which in this case are the fouth-order system:
	$q^{(4)} + \nabla V(q)=0.$
   The Hamiltonian for this second-order Lagrangian system is
	$$H(q,\dot{q},\hat{p}_{(0)},\hat{p}_{(1)}) = \frac{1}{2}\hat{p}_{(1)}^{2} + \hat{p}_{(0)}\dot{q} - V(q).$$
    The associated symplectic method will be
    \begin{equation*}
        \begin{split}
            & \frac{q_{1}-q_{0}}{h} =  \frac{\dot{q}_{1}+\dot{q}_{0}}{2},\quad
             \frac{\dot{q}_{1}-\dot{q}_{0}}{h} =  \frac{\hat{p}_{(1)1}+\hat{p}_{(1)0}}{2},\\
            & \frac{\hat{p}_{(0)1}-\hat{p}_{(0)0}}{h} =  \nabla V\left( \frac{q_{1} + q_{0}}{2} \right),\\
            & \frac{\hat{p}_{(1)1}-\hat{p}_{(1)0}}{h} =  -\frac{\hat{p}_{(0)1}+\hat{p}_{(0)0}}{2}.
        \end{split}
    \end{equation*}

    \subsection{Obstacle avoidance for a planar rigid body}
    In this section suppose $Q=SE(2)$, that all maps are considered in a local coordinate chart with coordinates $q=(x,y,\theta)$ and that the artificial potential has the form
    $$\displaystyle{V(x,y,\theta)=\frac{\tau}{x^{2}+y^{2}-r^{2}}}.$$

    We simulate the optimal trajectory of the previous problem using $\tau = 1\times 10^{-20}$, $r=1$ and we simulate $N = 400$ steps with a step-size $h=0.01$. To measure the norm we use the euclidean metric.

    \begin{figure}[htb!]
        \centering
        \includegraphics[scale=0.5]{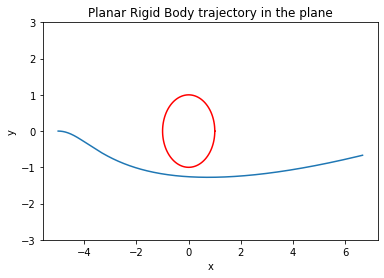}
        \caption{Trajectory of the optimal solution in the $xy$ plane.}
        \label{fig:RPB}
    \end{figure}

\section{Conclusions \& Further applications}

In this paper we have showed how to obtain discretization maps in higher-order tangent bundles by lifting discretization maps on the base manifold. Furthermore, we have showed some simple examples of higher-order discretization maps and simple applications to the construction of numerical integrators for optimal control problems. However, as we will describe below, the range of applications has still much to explore.
\subsection{Numerical methods for splines on the sphere}

		Given a Riemannian manifold $(Q,g)$ and the associated exponential map $\hbox{exp}_q: T_qQ\rightarrow Q$, the following map 		\begin{equation*}\label{eq:Rdexp}
		R_d(q,\xi)=\left(\hbox{exp}_q(-\xi/2), \hbox{exp}_q(\xi/2)\right)\, 
		\end{equation*}
		is a discretization map because it satisfies the properties in Definition~\ref{def:DiscreteMap2}. An example of discretization maps that can be associated with the exponential map is,  for instance, on the sphere $S^2$ with the Riemannian metric induced by the restriction of the standard metric on ${\mathbb R }^3$. The exponential map is given by 
		\begin{equation}\label{s2-expmap}		    \hbox{exp}_q(\xi)=\cos ( \|\xi\|) \, q+\sin  (\|\xi\|) \,  \frac{\xi}{\|\xi\|}, \qquad \xi\in T_q S^2\, .
		\end{equation}
		

    Higher-order discretization maps can be used in the problem of finding higher-order Riemannian polynomials, defined in \cite{Noa:89}, \cite{CroSil:95}, \cite{MacSil:2010}, \cite{SCC00} as the critical curves of the higher-order functional 
    \begin{equation*}\label{riem:poly:func}
        \int_{0}^{T} \frac{1}{2}\langle \frac{D^{k} \gamma}{dt^{k}},\frac{D^{k} \gamma}{dt^{k}} \rangle \ dt,
    \end{equation*}
    where $\frac{D^{k} \gamma}{dt^{k}}$ denotes $k$-th covariant derivative.
    
    In future work, we will apply the previous construction to obtain higher-order geometric integrators to numerically obtain Riemannian polynomials. 


	
\subsection{Discretization maps for systems on Lie groups}

Optimal control problems in Lie groups are extremely important because of the applications in robotics. Using the left-trivialized tangent bundle to have the identification $TG\approx G\times \mathfrak{g}$, the exponential map can be, for instance, used for defining a discretization map on the Lie group:
$$R_{d}(g,\xi) =\left(g \cdot \exp(-\xi/2),g \cdot \exp(\xi/2) \right).$$
In this scenario, higher-order lifts of $R_{d, g}$ are associated with higher-order derivatives of the map $R_{d, g}:\mathfrak{g}\rightarrow G\times G$. Then, we might generate geometric integrators for the problem \eqref{riem:poly:func} on Lie Groups.

\end{document}